\newcommand{\Ccal}{\mathcal{C}}
\newcommand{\Ncal}{\mathcal{N}}
\newcommand{\bfeta}{\boldsymbol \eta}
\newcommand{\sL}{L^{(s)}}
\newcommand{\bfepsilon}{\boldsymbol{\epsilon}}
\newcommand{\nh}{N} 
\newcommand{\nr}{n} 
\newcommand{\tH}{\widetilde{H}}
\newcommand{\tL}{\widetilde{L}}
\newcommand{\tsL}{\widetilde{L}^{(s)}}
\newcommand{\hatA}{\widehat{A}}
\newcommand{\hatE}{\widehat{E}}
\newcommand{\hatx}{\hat{x}}
\newcommand{\haty}{\hat{y}}
\newcommand{\hatB}{\widehat{B}}
\newcommand{\hatC}{\widehat{C}}
\newcommand{\hatH}{\widehat{H}}
\newcommand{\tC}{\widetilde{C}}
\newcommand{\tB}{\widetilde{B}}
\newcommand{\tE}{\widetilde{E}}
\newcommand{\tA}{\widetilde{A}}
\newcommand{\tx}{\tilde{x}}
\newcommand{\ty}{\tilde{y}}
\newcommand{\bfiota}{\boldsymbol \iota}
\newcommand{\Real}{\mathfrak{R}}
\newcommand{\Imag}{\mathfrak{I}}
\newcommand{\kibitz}[2]{\ifnum\Comments=1\textcolor{#1}{#2}\fi}
\newenvironment{keywords}%
   {\begin{trivlist}\item[]{\bfseries\sffamily Keywords:}\ }
   {\end{trivlist}}
\newtheorem{lemma}{Lemma}
\newtheorem{theorem}{Theorem}
\newtheorem{corollary}{Corollary}
\theoremstyle{definition}
\newtheorem{proposition}{Proposition}
\title{Learning low-dimensional dynamical-system models from noisy frequency-response data with Loewner rational interpolation}
\author{Zlatko Drma\v{c}\thanks{Faculty of Science, Department of Mathematics, University of Zagreb, Bijeni\v{c}ka 30, 10000 Zagreb, Croatia} \and Benjamin Peherstorfer\thanks{Courant Institute of Mathematical Sciences, New York University, New York, NY 10012, USA}}
\date{November 2020}
\begin{document}

\maketitle

\begin{center}
\emph{Dedicated to Athanasios C Antoulas on the occasion of his 70th birthday.}
\end{center}

\begin{abstract}
	Loewner rational interpolation
	provides a versatile tool to learn low-dimensional dynamical-system models from frequency-response measurements. This work investigates the robustness of the Loewner approach to noise. The key finding is that if the measurements are polluted with Gaussian noise, then the error due to noise grows at most linearly with the standard deviation with high probability under certain conditions. The analysis gives insights into making the Loewner approach robust against noise via linear transformations and judicious selections of measurements. Numerical results demonstrate the linear growth of the error on benchmark examples.
\end{abstract}

\begin{keywords}data-driven modeling; nonintrusive model reduction; interpolatory model reduction; Loewner\end{keywords}

\section{Introduction}
Learning dynamical-system models from measurements is a widely studied task in science, engineering, and machine learning; see, e.g., system identification originating from the systems \& control community \cite{NewFIR,FIRSystem,Mendel,LjungBook,Viberg19951835,KramerERZ,Qin20061502,Reynders2012,772353,VectorFitting,doi:10.1137/15M1010774}, sparsity-promoting methods \cite{Schaeffer6634,doi:10.1137/18M116798X,Brunton12042016,Rudye1602614}, dynamic mode decomposition \cite{SchmidDMD,FLM:7843190,FLM:6837872,Tu2014391,NathanBook, DDMD-SISC-2018}, and operator inference \cite{Peherstorfer16DataDriven,pehersto15dynamic,doi:10.2514/6.2019-3707,KPW16ControlAdaptROM,Peherstorfer16AdaptROM,2019arXiv190803620S,PReProj}. Antoulas and collaborators introduced the Loewner approach \cite{ANTOULAS01011986,5356286,Mayo2007634,BeaG12} that constructs models directly from frequency-response measurements, without requiring computationally expensive training phases and without solving potentially non-convex optimization problems. This work investigates the robustness of the Loewner approach to noise in the frequency-response measurements. The main finding of this work is that under certain conditions and with high probability the error introduced by noise into Loewner models grows at most linearly with the standard deviation of the noise.

The Loewner approach \cite{ANTOULAS01011986,5356286,Mayo2007634,BeaG12} derives dynamical-system models from frequency-response data, i.e., from values of the transfer function of the high-dimensional dynamical system of interest. A series of works have extended the Loewner approach from linear time-invariant systems to bilinear systems \cite{doi:10.1137/15M1041432}, quadratic-bilinear systems \cite{doi:10.1002/nla.2200}, parametrized systems \cite{doi:10.1137/130914619}, time-delay systems \cite{SCHULZE2016125}, and structured systems \cite{SCHULZE2018250}. Learning Loewner models from time-domain data, instead of frequency-response measurements, is discussed in  \cite{LoewnerNonInt,PSW16TLoewner}. Learning Loewner models from noisy data has received relatively little attention. The work \cite{Lefteriu2010} provides a numerical investigation of the effect of noise on the accuracy of Loewner models. The work \cite{2018arXiv180300043H} discusses the rank of Hankel matrices if measurements are polluted by additive Gaussian noise. Numerical experiments in the thesis \cite[Section~2.1]{CosminsThesis} demonstrate that the selection of frequencies at which to obtain measurements and how to partition the measurements has a significant influence on the robustness against perturbations in the data such as noise. The work \cite{8550216} applies the Loewner approach to control tasks where only noisy measurements are available. The authors of \cite{BEATTIE20122916} discuss the robustness of interpolatory model reduction against perturbations in evaluations of the transfer function of the high-dimensional system; however, the perturbations are considered deterministic and stem from, e.g., numerical approximations via iterative methods. During the writing of this manuscript, the authors became aware of the work \cite{embree2019pseudospectra} that studies the sensitivity of Loewner interpolation to perturbations in a deterministic fashion via pseudospectra of the Loewner matrix pencils.

In this work, the robustness of the Loewner approach to Gaussian noise is considered. Rather than a deterministic analysis, the contribution of this work is an analysis that bounds the error introduced by noise in a probabilistic sense. In particular, the analysis shows that the error grows at most linearly in the standard deviation of the noise under certain conditions and with high probability. A relative noise model is considered, which seems realistic because measurement errors typically are relative to the value of the measured quantities. The conditions under which the proposed error bounds hold give insights into selecting the frequencies at which to measure and partitioning the data to reduce the effect of noise. The linear growth of the error with respect to the standard deviation of the noise is observed in numerical examples.

\section{Learning low-dimensional dynamical-system models with Loewner rational interpolation}
This section recapitulates model reduction with the Loewner approach; see, e.g., \cite{AntBG10,MORSurveySIREV,PWG17MultiSurvey} for general introductions to model reduction and related concepts.

\subsection{Linear time-invariant dynamical systems}
Consider the linear time-invariant system
\begin{equation}
E\dot{x}(t) = Ax(t) + Bu(t)\,,\qquad y(t) = C x(t)
\label{eq:Prelim:FOM}
\end{equation}
of order $\nh \in \mathbb{N}$ with system matrices $E, A \in \mathbb{R}^{\nh \times \nh}, B \in \mathbb{R}^{\nh \times 1}, $ and $C \in \mathbb{R}^{1 \times \nh}$. The state at time $t$ is $x(t) \in \mathbb{R}^{\nh}$ and the output at time $t$ is  $y(t) \in \mathbb{R}$. The transfer function is
\[
H(s) = C(sE - A)^{-1}B\,,\qquad s \in \mathbb{C}\,.
\]
In the following, we only consider systems with full-rank matrix $E$.

\subsection{Loewner rational interpolation}
\label{sec:Prelim:Loewner}
To derive a reduced model of dimension $\nr \in \mathbb{N}$ of system \eqref{eq:Prelim:FOM}, consider $2\nr$ interpolation points $s_1, \dots, s_{2\nr} \in \mathbb{C}$. The interpolation points are partitioned into two sets $\{\mu_1, \dots, \mu_{\nr}\}$ and $\{\gamma_1, \dots, \gamma_{\nr}\}$ of equal size. Define the $\nr \times \nr$ Loewner matrix $L$ and the $\nr \times \nr$ shifted Loewner matrix $L^{(s)}$ as
\[
L_{ij} = \frac{H(\mu_i) - H(\gamma_j)}{\mu_i - \gamma_j}\,,\qquad \sL_{ij} = \frac{\mu_i H(\mu_i) - \gamma_jH(\gamma_j)}{\mu_i - \gamma_j}\,,\qquad i,j = 1, \dots, \nr\,,
\]
together with the $\nr \times 1$ input matrix $\hatB$ and the $1 \times \nr$ output matrix $\hatC$ with components
\begin{equation}
\hatB_i = H(\mu_i)\,,\qquad \hatC_i = H(\gamma_i)\,,\qquad i = 1, \dots, \nr\,.
\label{eq:DefOfBHatAndCHat}
\end{equation}
The Loewner model is
\[
\hatE\dot{\hatx}(t) = \hatA \hatx(t) + \hatB u(t)\,,\qquad \haty(t) = \hatC \hatx(t)
\]
with $\hatE = -L$ and $\hatA = -\sL$. The $\nr$-dimensional state at time $t$ is $\hat{x}(t)$ and the output at time $t$ is $\hat{y}(t)$. The transfer function of the Loewner model is
\[
\hatH(s) = \hatC (s \hatE - \hatA)^{-1}\hatB\,,\qquad s \in \mathbb{C}\,.
\]
The Loewner approach guarantees that the transfer function $\hatH$ of the Loewner model interpolates the transfer function $H$ of the system \eqref{eq:Prelim:FOM} at the interpolation points $s_1, \dots, s_{2\nr}$, which means
\[
H(s_i) = \hatH(s_i)\,,\qquad i = 1, \dots, 2\nr\,.
\]

\section{Learning Loewner models from noisy frequency-response measurements}
We now study the robustness of the Loewner approach to noise in the transfer-function values of the system \eqref{eq:Prelim:FOM}. The key contribution is Theorem~\ref{thm:Noise} that bounds the error that is introduced by noise under certain conditions.

\subsection{Noisy transfer-function values}
\label{sec:Noise:Random}
Let $\mu \in \mathbb{C}$ with the real part $\Real(\mu)$ and the imaginary part $\Imag(\mu)$ and let $0 < \sigma \in \mathbb{R}$. We denote with $\epsilon \sim \Ccal\Ncal(\mu, \sigma)$ a complex random variable, where the real $\Real(\epsilon)$ and the imaginary part $\Imag(\epsilon)$ are independently normally distributed. The real part $\Real(\epsilon)$ has mean $\Real(\mu)$, the imaginary part $\Imag(\epsilon)$ has mean $\Imag(\mu)$. The real and the imaginary part of $\epsilon$ have standard deviation $\sigma$.

Let $\epsilon_1, \dots, \epsilon_{\nr} \sim \Ccal\Ncal(0, 1)$ and $\eta_1, \dots, \eta_{\nr} \sim \Ccal\Ncal(0, 1)$ be independent random variables. Define the noisy transfer-function values as
\[
H_{\sigma}(\mu_i) = H(\mu_i)(1 + \sigma\epsilon_i)\,,\qquad H_{\sigma}(\gamma_i) = H(\gamma_i)(1 + \sigma\eta_i)\,,\quad i = 1, \dots, \nr ,
\]
so that
\[
H_{\sigma}(\mu_i) \sim \Ccal\Ncal(H(\mu_i), H(\mu_i)\sigma)\,,\qquad H_{\sigma}(\gamma_i) \sim \Ccal\Ncal(H(\gamma_i), H(\gamma_i)\sigma)\,,\quad i = 1, \dots, \nr .
\]
The noise pollutes the transfer-function values in a relative sense, i.e., the standard deviation of $H_{\sigma}(\mu_i)$ is scaled by $H(\mu_i)$. We consider such a relative noise model to be realistic in our situation because measurement errors typically are relative to the value of the quantity that is measured. Note, however, that the technics used in the following analysis can be extended to an absolute noise model, where the standard deviation of the noise is independent of the transfer-function value; see Section~\ref{sec:Remarks}.

\subsection{Loewner and noisy transfer-function values}
From the noisy transfer-function values, we derive the noisy Loewner matrices
\[
\tL_{ij} = \frac{H_{\sigma}(\mu_i) - H_{\sigma}(\gamma_j)}{\mu_i - \gamma_j}\,,\qquad \tsL_{ij} = \frac{\mu_i H_{\sigma}(\mu_i) - \gamma_j H_{\sigma}(\gamma_j)}{\mu_i - \gamma_j}\,,\qquad i,j = 1, \dots, \nr\,,
\]
which have the same structure as in Section~\ref{sec:Prelim:Loewner}, except that the noisy transfer-function values $H_{\sigma}(\mu_1), \dots, H_{\sigma}(\mu_{\nr})$, $ H_{\sigma}(\gamma_1), \dots, H_{\sigma}(\gamma_{\nr})$ are used rather than the noiseless values $H(\mu_1), \dots, H(\mu_{\nr}), H(\gamma_1), \dots, H(\gamma_{\nr})$. We decompose the noisy Loewner and the noisy shifted Loewner matrix into deterministic and random parts as
\[
\tL = L + \sigma \delta L\,,\qquad \tsL = \sL + \sigma \delta \sL\,,
\]
with
\[
\delta L_{ij} = \frac{H(\mu_i)\epsilon_i - H(\gamma_j)\eta_j}{\mu_i - \gamma_j}\,,\qquad \delta \sL_{ij} = \frac{\mu_i H(\mu_i)\epsilon_i - \gamma_j H(\gamma_j)\eta_j}{\mu_i - \gamma_j}\,,\qquad i, j = 1, \dots, \nr\,.
\]
We obtain the system matrices
\[
\tE = \hatE + \sigma \delta E\,,\qquad \tA = \hatA + \sigma \delta A\,,
\]
where $\delta E = -\delta L_{ij}$ and $\delta A = -\delta \sL_{ij}$. Similarly, we  define $\tB = \hatB + \sigma\delta B$ and $\tC = \hatC + \sigma\delta C$ with
\begin{equation}\label{eq:dB-dC}
\delta B_{i} = H(\mu_i)\epsilon_i\,,\qquad \delta C_i = H(\gamma_i)\eta_i\,,\qquad i = 1, \dots, \nr.
\end{equation}
The Loewner model learned from the noisy transfer-function values is then given by
\begin{equation}
\tE\dot{\tx}(t) = \tA\tx(t) + \tB u(t)\,,\qquad \ty(t) = \tC \tx(t) ,
\label{eq:Noise:NoisyLoewnerModel}
\end{equation}
with the $\nr$-dimensional state $\tx(t)$ at time $t$ and the output $\ty(t)$ at time $t$. The transfer function of the model \eqref{eq:Noise:NoisyLoewnerModel} is
\[
\tH(s) = \tC(s\tE - \tA)^{-1}\tB\,,\qquad s \in \mathbb{C}\,.
\]
We call \eqref{eq:Noise:NoisyLoewnerModel} the noisy Loewner model.

\subsection{Noise structure}
The Loewner and the shifted Loewner matrices introduce a structure in the noise that is added to the system matrices of the Loewner model learned from the noisy transfer-function values. Consider the matrix $s \delta E - \delta A$ and obtain
\begin{align*}
s \delta E_{ij} - \delta A_{ij} = & -\frac{1}{\mu_i - \gamma_j} \left(s H(\mu_i)\epsilon_i - s H(\gamma_j)\eta_j - \mu_i H(\mu_i)\epsilon_i + \gamma_j H(\gamma_j)\eta_j\right)\\
= & \frac{1}{\gamma_j - \mu_i}\left((s - \mu_i) H(\mu_i)\epsilon_i + (-s + \gamma_j) H(\gamma_j)\eta_j\right)
\end{align*}
to write in matrix form as
\begin{multline}
s\delta E - \delta A = \underbrace{\begin{bmatrix}
\epsilon_1 & & & \\
& \epsilon_2 & & \\
& & \ddots & \\
& & & \epsilon_{\nr}\end{bmatrix}}_{\boldsymbol{\epsilon}}\underbrace{\begin{bmatrix} H(\mu_1)\frac{s - \mu_1}{\gamma_1 - \mu_1} & \dots &  H(\mu_1)\frac{s - \mu_{1}}{\gamma_1 - \mu_{\nr}}\\
\vdots & \ddots & \vdots\\
 H(\mu_{\nr})\frac{s - \mu_{\nr}}{\gamma_1 - \mu_1} & \dots &  H(\mu_{\nr})\frac{s - \mu_{\nr}}{\gamma_{\nr} - \mu_{\nr}}
\end{bmatrix}}_{F_E}  + \\
\underbrace{\begin{bmatrix} H(\gamma_1)\frac{\gamma_1 - s}{\gamma_1 - \mu_1} & \dots &  H(\gamma_{\nr})\frac{\gamma_{\nr} - s}{\gamma_1 - \mu_{\nr}}\\
\vdots & \ddots & \vdots\\
 H(\gamma_1)\frac{\gamma_1 - s}{\gamma_1 - \mu_1} & \dots &  H(\gamma_{\nr})\frac{\gamma_{\nr} - s}{\gamma_{\nr} - \mu_{\nr}}\end{bmatrix}}_{F_A}\underbrace{\begin{bmatrix}\eta_1 & & & \\
& \eta_2 & & \\
& & \ddots & \\
& & & \eta_{\nr}
\end{bmatrix}}_{\boldsymbol \eta}\,.
\label{eq:sEADeltaMat}
\end{multline}
Equation \eqref{eq:sEADeltaMat} reveals that the random parts of $s\delta E - \delta A$ can be singled out into the two diagonal random matrices $\bfepsilon$ and $\bfeta$ of dimension $\nr \times \nr$. The diagonal entries of $\bfepsilon$ and $\bfeta$ are independent and have distribution $\Ccal\Ncal(0, 1)$.

\subsection{Bounding the error due to noisy transfer-function values}
	The task is to bound
	\begin{equation}\label{H-H}
	\hatH(s) - \tH(s) = \hatC (s\hatE - \hatA)^{-1}\hatB - \tC (s\tE - \tA)^{-1}\tB,
	\end{equation}
	where $s$ typically takes the values in some specified $\Omega\subset\mathbb{C}$; for instance on the imaginary axis, possibly within a specified frequency range. In any case, we assume in the following that $\Omega$ is free of the poles of $\hatH$. The following main results hold pointwise and therefore are agnostic to additional structure imposed by $\Omega$ besides that it excludes poles of $\hatH$.

	If $\widehat{\lambda}_i$, $\widehat{\phi}_i$ ($\widetilde{\lambda}_i$, $\widetilde{\phi}_i$) are the poles with the corresponding residues of $\hatH$ ($\tH$), then \cite[Proposition 3.3]{gugercin_2008} gives
	\begin{equation}
	\| \hatH - \tH\|_{\mathcal{H}_2}^2 = \sum_{i=1}^{n} \widehat{\phi}_i (\hatH(-\widehat{\lambda}_i) - \tH(-\widehat{\lambda}_i)) +
	\sum_{j=1}^{n} \widetilde{\phi}_j (\tH(-\widetilde{\lambda}_j) - \hatH(-\widetilde{\lambda}_j))\,,
	\label{eq:ErrorH2}
	\end{equation}
	which shows that the error (\ref{H-H}) is of particular interest at the reflected poles of both systems (for \eqref{eq:ErrorH2} to hold, both $\hatH$ and $\tH$ are assumed stable, of the same order $n$, and all poles of both systems are assumed simple). We do not tackle the issue of a probabilistic error bound for the $\mathcal{H}_2$ norm; this topic is left for our future work.

	For an estimate of the error (\ref{H-H}), we need to understand the effect of the random noise in the matrices $\tE = \hatE + \sigma \delta E$, $\tA = \hatA + \sigma \delta A$,  $\tB = \hatB + \sigma \delta B$ to the solution of the linear system $(s\tE - \tA)^{-1}\tB$. To that end, we first briefly review the (deterministic) error bound for a solution of the perturbed system. The key is the condition number $\kappa_2(s \hatE - \hatA) = \|(s\hatE - \hatA)^{-1}\|_2 \|s\hatE - \hatA\|_2$, see, e.g., \cite[Theorem~7.2]{HighamBook}.
	\begin{proposition}\label{PROP:Lin-Sys-Pert}
Let $s$ be different from the poles of $\hatH$, and consider
the perturbations (\ref{eq:sEADeltaMat}) and (\ref{eq:dB-dC}) deterministic and bounded as
$\|\sigma (s\delta E - \delta A)\|_2 \leq \zeta \|s\hatE - \hatA\|_2$, $\|\sigma\delta B\|_2 \leq\zeta \|\hatB\|_2$,  where
$\zeta > 0$ is such that $ \zeta \kappa_2(s\hatE-\hatA)<1$.
Then $s\tE - \tA$ is nonsingular and
			\begin{equation}
			\frac{\|(s\tE - \tA)^{-1}\tB - (s\hatE - \hatA)^{-1}\hatB\|_2}{\|(s\hatE - \hatA)^{-1}\hatB\|_2} \leq \frac{2\zeta}{1 - \zeta \kappa_2(s\hatE - \hatA)}\kappa_2(s\hatE - \hatA)\,.
			\label{eq:Proof:SystemOfLinearPert}
			\end{equation}
	\end{proposition}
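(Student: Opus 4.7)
The plan is to follow the classical perturbation template for linear systems and then specialize it to the particular form of the right-hand side perturbation. First I would factor
\[
s\tE - \tA = (s\hatE - \hatA) + \sigma(s\delta E - \delta A) = (s\hatE - \hatA)\bigl(I + M\bigr),\qquad M := (s\hatE - \hatA)^{-1}\sigma(s\delta E - \delta A).
\]
The submultiplicativity of $\|\cdot\|_2$ together with the hypothesis $\|\sigma(s\delta E - \delta A)\|_2\leq\zeta\|s\hatE-\hatA\|_2$ gives $\|M\|_2\leq\zeta\kappa_2(s\hatE-\hatA)<1$, so $I+M$ is invertible by the Neumann series and therefore so is $s\tE-\tA$. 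The same Neumann bound supplies
\[
\|(s\tE-\tA)^{-1}\|_2 \;\leq\; \frac{\|(s\hatE-\hatA)^{-1}\|_2}{1 - \zeta\,\kappa_2(s\hatE - \hatA)},
\]
which is the single ingredient I will need later to convert a residual into a solution error.

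Next I would form the residual identity. Writing $\hat w = (s\hatE-\hatA)^{-1}\hatB$ and $\tilde w = (s\tE-\tA)^{-1}\tB$, subtracting the two defining equations yields
\[
(s\tE - \tA)(\tilde w - \hat w) \;=\; \tB - (s\tE - \tA)\hat w \;=\; \sigma\,\delta B \;-\; \sigma(s\delta E - \delta A)\,\hat w.
\]
Applying $(s\tE-\tA)^{-1}$, taking $\|\cdot\|_2$, and using the perturbation bounds on $\delta B$ and on $s\delta E-\delta A$, I obtain
\[
\|\tilde w - \hat w\|_2 \;\leq\; \|(s\tE-\tA)^{-1}\|_2\bigl(\zeta\|\hatB\|_2 + \zeta\|s\hatE-\hatA\|_2\,\|\hat w\|_2\bigr).
\]
To homogenize, I would invoke the trivial bound $\|\hatB\|_2 = \|(s\hatE-\hatA)\hat w\|_2 \leq \|s\hatE-\hatA\|_2\,\|\hat w\|_2$, collapsing the two terms into a single factor $2\zeta\|s\hatE-\hatA\|_2\|\hat w\|_2$.

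Dividing by $\|\hat w\|_2$ and plugging in the Neumann bound for $\|(s\tE-\tA)^{-1}\|_2$ from the first step yields exactly
\[
\frac{\|\tilde w - \hat w\|_2}{\|\hat w\|_2}
\;\leq\; \frac{2\zeta\,\|(s\hatE-\hatA)^{-1}\|_2\,\|s\hatE-\hatA\|_2}{1 - \zeta\,\kappa_2(s\hatE-\hatA)}
\;=\; \frac{2\zeta}{1-\zeta\,\kappa_2(s\hatE-\hatA)}\,\kappa_2(s\hatE-\hatA),
\]
which is \eqref{eq:Proof:SystemOfLinearPert}. There is no real obstacle here beyond careful bookkeeping; the only subtlety worth flagging is the symmetric treatment of the two sources of perturbation (the matrix pencil and the right-hand side), which is cleanly handled by the bound $\|\hatB\|_2\leq\|s\hatE-\hatA\|_2\|\hat w\|_2$ and accounts for the factor of $2$ in the numerator. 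This is essentially a transcription of \cite[Theorem~7.2]{HighamBook} tailored to the Loewner-pencil setting, and the preparation it provides — in particular the explicit appearance of $\kappa_2(s\hatE-\hatA)$ — is what will later permit a probabilistic version once $\zeta$ is replaced by a high-probability norm bound on the random perturbation \eqref{eq:sEADeltaMat}.
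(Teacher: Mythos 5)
Your proof is correct and is precisely the standard perturbation argument that the paper invokes by citation (it does not spell out a proof, but simply refers to \cite[Theorem~7.2]{HighamBook} for the linear system $\widehat{G}\widehat{x}=\widehat{B}$ under the stated perturbations). Your Neumann-series factorization, residual identity, and the homogenization via $\|\hatB\|_2\leq\|s\hatE-\hatA\|_2\,\|(s\hatE-\hatA)^{-1}\hatB\|_2$ reproduce that standard result exactly, so nothing further is needed.
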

	This is the standard perturbation bound for the linear system $\widehat{G} \widehat{x}=\widehat{B}$, $\widehat{G} = s\hatE - \hatA$, under the deterministic perturbations $\Delta \widehat{G} = \sigma (s\delta E - \delta A)$ and $\Delta\hatB=\sigma\delta B$.

	Recall that by (\ref{eq:sEADeltaMat}), $s\delta E - \delta A = \bfepsilon F_E + F_A \bfeta$, where $\bfepsilon$ and $\bfeta$ are diagonal matrices whose diagonals are random vectors. These can be bounded in a probabilistic sense, using concentration inequalities which we briefly review next.

	\begin{proposition}\label{PROP:Concetration-Ineq}
	 Let $Z = [z_1, \dots, z_{\nr}]^T$ be a random vector with independent standard normal real components $z_i \sim \Ncal(0, 1), i = 1, \dots, \nr$. Then  with probability at least $1 - \exp(-\nr/2)$
	\begin{equation}
	\|Z\|_2 \leq 2\sqrt{\nr}.
	\label{eq:Helper2Real}
	\end{equation}
	 If  $Z = [z_1, \dots, z_{\nr}]^T$ is  a vector of independent complex random variables $z_i \sim \Ccal\Ncal(0, 1)$, with $\Ccal\Ncal$ defined in Section~\ref{sec:Noise:Random}, then
	 \begin{equation}
	 \|Z\|_2 \leq 4\sqrt{\nr}
	 \label{eq:Helper3}
	 \end{equation}
	 holds with probability at least $1 - 2\exp(-\nr/2)$.
	\end{proposition}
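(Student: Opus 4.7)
The plan is to reduce both inequalities to standard Gaussian concentration and then combine the two parts of a complex normal via a short union bound.

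For the real case, I would observe that the map $f:\mathbb{R}^{\nr}\to\mathbb{R}$, $f(Z)=\|Z\|_2$ is $1$-Lipschitz, and that for $Z$ with i.i.d.\ $\mathcal{N}(0,1)$ entries one has $\mathbb{E}\|Z\|_2\leq(\mathbb{E}\|Z\|_2^2)^{1/2}=\sqrt{\nr}$ by Jensen. The Gaussian concentration inequality for Lipschitz functions then gives
\[
\Pr\!\left(\|Z\|_2 \geq \sqrt{\nr}+t\right)\;\leq\;\Pr\!\left(\|Z\|_2\geq \mathbb{E}\|Z\|_2+t\right)\;\leq\;\exp(-t^2/2).
\]
Taking $t=\sqrt{\nr}$ yields $\|Z\|_2\leq 2\sqrt{\nr}$ with probability at least $1-\exp(-\nr/2)$, which is exactly \eqref{eq:Helper2Real}. (Alternatively one can invoke the Laurent--Massart chi-squared tail bound on $\|Z\|_2^2\sim\chi^2_{\nr}$ with the parameter choice $t=\nr/2$; this is only a cosmetic change.)

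For the complex case I would split each coordinate into its real and imaginary parts. By the definition of $\mathcal{C}\mathcal{N}(0,1)$ in Section~\ref{sec:Noise:Random}, writing $z_i=\Real(z_i)+\mathrm{i}\,\Imag(z_i)$ yields $2\nr$ independent $\mathcal{N}(0,1)$ variables, so the vectors $Z_R=[\Real(z_1),\ldots,\Real(z_{\nr})]^T$ and $Z_I=[\Imag(z_1),\ldots,\Imag(z_{\nr})]^T$ are independent real standard-normal vectors in $\mathbb{R}^{\nr}$. Since $|z_i|^2=\Real(z_i)^2+\Imag(z_i)^2$, one has $\|Z\|_2^2=\|Z_R\|_2^2+\|Z_I\|_2^2$, and hence (using $\sqrt{a^2+b^2}\leq a+b$ for $a,b\geq 0$)
\[
\|Z\|_2\;\leq\;\|Z_R\|_2+\|Z_I\|_2.
\]
Applying the already-proved real bound \eqref{eq:Helper2Real} to $Z_R$ and $Z_I$ separately, and taking a union bound over the two failure events, I get $\|Z\|_2\leq 4\sqrt{\nr}$ with probability at least $1-2\exp(-\nr/2)$, which is \eqref{eq:Helper3}.

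There is no genuine obstacle here: the content of the proposition is a clean packaging of standard Gaussian concentration. The only point worth being careful about is the constants, in particular checking that the mean $\mathbb{E}\|Z\|_2$ is bounded above by $\sqrt{\nr}$ (so that the deviation term alone contributes the second $\sqrt{\nr}$ in $2\sqrt{\nr}$), and, in the complex case, that the two components of each $z_i$ are genuinely independent $\mathcal{N}(0,1)$ variables under the convention of Section~\ref{sec:Noise:Random}, so that $Z_R$ and $Z_I$ are themselves independent and the union bound is legitimate.
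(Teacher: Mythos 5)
Your proof is correct and follows essentially the same route as the paper: the real bound \eqref{eq:Helper2Real} via Gaussian concentration for the $1$-Lipschitz map $Z\mapsto\|Z\|_2$ (the paper simply cites this as ``Gaussian concentration'' with a reference to Wainwright), and the complex bound \eqref{eq:Helper3} via the split $\|Z\|_2\leq\|\Real(Z)\|_2+\|\Imag(Z)\|_2$ and a union bound. The only cosmetic remark is that the union bound over the two failure events does not actually require independence of $Z_R$ and $Z_I$, so that caveat at the end is unnecessary.
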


\noindent The estimate (\ref{eq:Helper2Real}) follows from Gaussian concentration and because the $\chi^2$ distribution is sub-Gaussian; see, e.g., \cite[Example~2.28]{wainwright_2019}. The statement (\ref{eq:Helper3}) follows from \eqref{eq:Helper2Real}, because $\|Z\|_2 \leq \|\Real(Z)\|_2 + \|\Imag(Z)\|_2$, where $\Real(Z)$ is the vector that has as components the real parts of the components of $Z$ and $\Imag(Z)$ is the vector that has as components the imaginary parts of $Z$.

		\begin{lemma}\label{LEM:Lin-Sys-Pert-Prob} Let $s\in\Omega$, i.e., $s$ is different from the poles of $\hatH$,  be such that
			\begin{equation}
			0 < \sigma < \frac{1}{\kappa_2(s\hatE - \hatA)} \min\left\{\frac{\|s\hatE - \hatA\|_2}{4\sqrt{\nr}\left(\|F_E\|_2 + \|F_A\|_2\right)}, \frac{\|\hatB\|_2}{4\sqrt{\nr} \|\hatB\|_{\infty}}\right\}\,.
			\label{eq:Noise:Cond}
			\end{equation}
			Then, with probability at least $1 - 4\exp(-n/2)$,  $s$ is not a pole of $\tH$  and the error bound (\ref{eq:Proof:SystemOfLinearPert}) in Proposition \ref{PROP:Lin-Sys-Pert} holds .
		\end{lemma}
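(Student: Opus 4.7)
The plan is to verify that, with the stated high probability, the deterministic hypotheses of Proposition~\ref{PROP:Lin-Sys-Pert} hold for a suitable $\zeta$, so that the perturbation bound~\eqref{eq:Proof:SystemOfLinearPert} can simply be invoked. Since the randomness enters only through the diagonal matrices $\bfepsilon$ and $\bfeta$ in the decomposition $s\delta E - \delta A = \bfepsilon F_E + F_A \bfeta$, and since $\delta B_i = H(\mu_i)\epsilon_i$, the two perturbation sizes to control are $\|s\delta E - \delta A\|_2$ and $\|\delta B\|_2$.

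First I would apply Proposition~\ref{PROP:Concetration-Ineq} to the complex vectors $[\epsilon_1,\dots,\epsilon_{\nr}]^T$ and $[\eta_1,\dots,\eta_{\nr}]^T$. Each concentration event gives $\|\epsilon\|_2\le 4\sqrt{\nr}$, respectively $\|\eta\|_2\le 4\sqrt{\nr}$, with probability at least $1-2\exp(-\nr/2)$, so by the union bound both hold simultaneously with probability at least $1-4\exp(-\nr/2)$. Condition on this event for the remainder of the argument. Using that the spectral norm of a diagonal matrix is the maximum absolute value of its entries and is therefore bounded by the Euclidean norm of its diagonal, we obtain $\|\bfepsilon\|_2\le\|\epsilon\|_2\le 4\sqrt{\nr}$ and likewise $\|\bfeta\|_2\le 4\sqrt{\nr}$. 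The triangle inequality and submultiplicativity then give
\begin{equation*}
\|\sigma(s\delta E - \delta A)\|_2 \;\leq\; \sigma\bigl(\|\bfepsilon\|_2\|F_E\|_2 + \|F_A\|_2\|\bfeta\|_2\bigr) \;\leq\; 4\sqrt{\nr}\,\sigma\bigl(\|F_E\|_2 + \|F_A\|_2\bigr).
\end{equation*}
For the input perturbation, $\|\delta B\|_2^2=\sum_i|H(\mu_i)|^2|\epsilon_i|^2\le \|\hatB\|_\infty^2\,\|\epsilon\|_2^2$, so $\|\sigma\delta B\|_2 \leq 4\sqrt{\nr}\,\sigma\,\|\hatB\|_\infty$.

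Next I would define
\begin{equation*}
\zeta \;=\; \max\left\{\frac{4\sqrt{\nr}\,\sigma(\|F_E\|_2+\|F_A\|_2)}{\|s\hatE-\hatA\|_2},\;\frac{4\sqrt{\nr}\,\sigma\,\|\hatB\|_\infty}{\|\hatB\|_2}\right\}.
\end{equation*}
By construction this $\zeta$ satisfies the two norm inequalities that Proposition~\ref{PROP:Lin-Sys-Pert} requires. The remaining task is to show $\zeta\,\kappa_2(s\hatE-\hatA)<1$: rearranging the two fractions inside the maximum and multiplying by $\kappa_2(s\hatE-\hatA)$ produces exactly the two bounds on $\sigma$ inside the minimum in~\eqref{eq:Noise:Cond}, so the hypothesis~\eqref{eq:Noise:Cond} of the lemma is equivalent to $\zeta\,\kappa_2(s\hatE-\hatA)<1$. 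With this strict inequality, Proposition~\ref{PROP:Lin-Sys-Pert} yields both that $s\tE-\tA$ is nonsingular (so $s$ is not a pole of $\tH$) and the bound~\eqref{eq:Proof:SystemOfLinearPert}, establishing the claim on the conditioned event of probability at least $1-4\exp(-\nr/2)$.

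The only step that requires genuine care is the passage from the vector concentration~\eqref{eq:Helper3} to a spectral-norm bound for $\bfepsilon$ and $\bfeta$; the bound $\|\bfepsilon\|_2=\max_i|\epsilon_i|\le\|\epsilon\|_2$ is admittedly loose, but it is the natural one that allows the concentration inequalities of Proposition~\ref{PROP:Concetration-Ineq} to be plugged in directly and it matches the constants appearing in~\eqref{eq:Noise:Cond}. Everything else is bookkeeping: bounding the two perturbations, choosing $\zeta$ as the appropriate maximum, matching it against the minimum in~\eqref{eq:Noise:Cond}, and assembling the events via a union bound.
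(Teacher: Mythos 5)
Your proposal is correct and follows essentially the same route as the paper's proof: bound $\|\bfepsilon\|_2,\|\bfeta\|_2$ via Proposition~\ref{PROP:Concetration-Ineq}, deduce the deterministic hypotheses of Proposition~\ref{PROP:Lin-Sys-Pert} with $\zeta=\sigma\widehat{\zeta}$ for the same choice of $\widehat{\zeta}$, and check that \eqref{eq:Noise:Cond} gives $\zeta\kappa_2(s\hatE-\hatA)<1$. The only cosmetic difference is that you combine the two concentration events by a union bound where the paper uses independence to write $(1-2\exp(-n/2))^2\geq 1-4\exp(-n/2)$; both yield the stated probability.
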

	\begin{proof}	Since, by (\ref{eq:sEADeltaMat}), $s\delta E - \delta A = \bfepsilon F_E + F_A \bfeta$,
we have
		\begin{equation}
		\|s \delta E - \delta A\|_2 \leq \|\bfepsilon\|_2\|F_E\|_2 + \|F_A\|_2\|\bfeta\|_2
		\label{eq:Proof:sEABoundEpsilon}.
		\end{equation}
Because $\bfepsilon$ is diagonal\footnote{Note that $\bfepsilon$ is a diagonal \emph{matrix} defined in \eqref{eq:sEADeltaMat}.} with the elements $\epsilon_1, \dots, \epsilon_{\nr}$ on the diagonal, we obtain, using Proposition \ref{PROP:Concetration-Ineq}, that
		\begin{equation}
		\|\bfepsilon\|_2 = \max_{i = 1, \dots, \nr} |\epsilon_i| = \|[\epsilon_1, \dots, \epsilon_{\nr}]^T\|_{\infty} \leq \|[\epsilon_1, \dots, \epsilon_{\nr}]^T\|_2 \leq 4\sqrt{\nr}\,,
		\label{eq:Proof:MyBoundEpsilon}
		\end{equation}
		with probability at least $1 - 2\exp(-n/2)$. Let $\delta B_i$ denote the $i$-th component of $\delta B$, then $\delta B_i = H(\mu_i)\epsilon_i = \widehat{B}_i\epsilon_i$ holds for $i = 1, \dots, \nr$ and thus equation \eqref{eq:Proof:MyBoundEpsilon} means that
		\begin{eqnarray}
		\|\delta B\|_2 & \leq& \|\hatB\|_{\infty}\|[\epsilon_1, \dots, \epsilon_n]^T\|_2 \leq 4\sqrt{\nr}\|\hatB\|_{\infty}
		\label{eq:Proof:MyBoundB} \\
		\|\tB \|_2 &\leq & (1+\sigma 4 \sqrt{n}) \| \hatB\|_2 \label{eq:Proof:MyBoundB-1}
		\end{eqnarray}
		holds with probability at least $1 - 2\exp(-n/2)$. Similar arguments show that
		\begin{equation}
		\|\bfeta\|_2 \leq 4\sqrt{\nr}
		\label{eq:Proof:MyBoundEta}
		\end{equation}
		and
		\begin{equation}
		\|\delta C\|_2 \leq 4 \sqrt{\nr} \|\hatC\|_{\infty}
		\label{eq:Proof:MyBoundC}
		\end{equation}
		hold with probability at least $1 - 2\exp(-n/2)$, where we used $\delta C_i = H(\gamma_i)\eta_i = \widehat{C}_i\eta_i$ for $i = 1, \dots, \nr$.
		Set now
		\begin{equation}
		\zeta = \sigma \widehat{\zeta}, \;\; \widehat{\zeta} = \max\left\{\frac{4\sqrt{\nr}\left(\|F_E\|_2 + \|F_A\|_2\right)}{\|s\hatE - \hatA\|_2}, \frac{4\sqrt{\nr} \|\hatB\|_{\infty}}{\|\hatB\|_2}\right\}
		\label{eq:Proof:MyZeta}
		\end{equation}
		and observe that \eqref{eq:Noise:Cond} guarantees $\zeta \kappa_2(s\hatE - \hatA) < 1$.
		Thus, together with \eqref{eq:Proof:sEABoundEpsilon}, it follows that
		\begin{multline}
		\|\sigma(s \delta E - \delta A)\|_2 \leq \sigma 4\sqrt{\nr}(\|F_E\|_2 + \|F_A\|_2) \\ = \sigma \frac{4\sqrt{\nr}(\|F_E\|_2 + \|F_A\|_2)}{\|s \hatE - \hatA\|_2} \|s \hatE - \hatA\|_2
		\leq \zeta \|s \hatE - \hatA\|_2
		\label{eq:Proof:CondDetBoundA}
		\end{multline}
		with probability at least $(1 - 2\exp(-n/2))^2 \geq 1 - 4\exp(-n/2)$, where we used \eqref{eq:Proof:MyZeta}.

		With \eqref{eq:Proof:MyBoundB} and the definition of $\zeta$ in \eqref{eq:Proof:MyZeta}, we also obtain that
		\begin{equation}
		\|\sigma \delta B\|_2 \leq \sigma 4\sqrt{\nr} \|\hatB\|_{\infty} = \sigma \frac{4\sqrt{\nr}\|\hatB\|_{\infty}}{\|\hatB\|_2}\|\hatB\|_2 \leq \zeta \|\hatB\|_2
		\label{eq:Proof:CondDetBoundB}
		\end{equation}
		holds with probability at least $1 - 4\exp(-n/2)$.
		Thus, with \eqref{eq:Proof:CondDetBoundA}, \eqref{eq:Proof:CondDetBoundB}, and because \eqref{eq:Noise:Cond} implies $\zeta \kappa_2(s\hatE - \hatA) < 1$, the error bound (\ref{eq:Proof:SystemOfLinearPert})
		is applicable with probability at least $1 - 4\exp(-n/2)$, which also means that $s\tE - \tA$ is nonsingular.
		\end{proof}

	The following theorem bounds the error due to noise in the transfer-function values.
	\begin{theorem} Under the same assumptions as Lemma \ref{LEM:Lin-Sys-Pert-Prob}, for each $s\in\Omega$, there exists a constant $C_s > 0$ that may depend on $s$ such that
		\begin{equation}
		|\tH(s) - \hatH(s)| \leq C_s \sigma
		\label{eq:NoiseBound}
		\end{equation}
		holds with probability at least $1 - 4\exp(-n/2)$.
		\label{thm:Noise}
	\end{theorem}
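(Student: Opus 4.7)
The plan is to decompose the transfer-function error $\tH(s) - \hatH(s)$ into a piece that measures how much the solution of the linear system changes and a piece that measures how much the output matrix changes, and then apply Lemma~\ref{LEM:Lin-Sys-Pert-Prob} together with the concentration estimates already established in its proof. Concretely, I set $\hatx = (s\hatE - \hatA)^{-1}\hatB$ and $\tx = (s\tE - \tA)^{-1}\tB$, and write
\[
\tH(s) - \hatH(s) = \tC \tx - \hatC \hatx = \tC (\tx - \hatx) + (\tC - \hatC)\hatx = \tC(\tx - \hatx) + \sigma (\delta C)\hatx,
\]
so that by the triangle inequality
\[
|\tH(s) - \hatH(s)| \leq \|\tC\|_2 \|\tx - \hatx\|_2 + \sigma \|\delta C\|_2 \|\hatx\|_2.
\]

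The first step is to control $\|\tx - \hatx\|_2$. The assumption \eqref{eq:Noise:Cond} is exactly what Lemma~\ref{LEM:Lin-Sys-Pert-Prob} requires, so on the event of probability at least $1 - 4\exp(-n/2)$ on which the lemma applies, the deterministic perturbation bound \eqref{eq:Proof:SystemOfLinearPert} yields
\[
\|\tx - \hatx\|_2 \leq \frac{2\sigma \widehat{\zeta}\, \kappa_2(s\hatE - \hatA)}{1 - \sigma \widehat{\zeta}\, \kappa_2(s\hatE - \hatA)}\, \|\hatx\|_2,
\]
with $\widehat{\zeta}$ as in \eqref{eq:Proof:MyZeta}. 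Because $\sigma \widehat{\zeta}\, \kappa_2(s\hatE - \hatA) < 1$ by \eqref{eq:Noise:Cond}, the denominator is bounded away from zero, and this whole quantity is at most $\sigma$ times a constant that depends only on $s$, $\nr$, $\hatE$, $\hatA$, $\hatB$, $F_E$, $F_A$.

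The second step is to bound the remaining pieces on the same event. From \eqref{eq:Proof:MyBoundC} and \eqref{eq:Proof:MyBoundB-1} we have $\|\delta C\|_2 \leq 4\sqrt{\nr}\,\|\hatC\|_\infty$ and $\|\tC\|_2 \leq \|\hatC\|_2 + \sigma\cdot 4\sqrt{\nr}\,\|\hatC\|_\infty$, both valid on the same high-probability event (no additional union bound is needed because $\delta C$ depends on the same $\bfeta$ whose norm is already controlled in the proof of Lemma~\ref{LEM:Lin-Sys-Pert-Prob}). Also, $\|\hatx\|_2 \leq \|(s\hatE - \hatA)^{-1}\|_2 \|\hatB\|_2$, a purely deterministic quantity depending on $s$.

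Assembling these pieces gives
\[
|\tH(s) - \hatH(s)| \leq \sigma\,\Bigl[ \|\tC\|_2 \cdot \frac{2 \widehat{\zeta}\, \kappa_2(s\hatE - \hatA)}{1 - \sigma \widehat{\zeta}\, \kappa_2(s\hatE - \hatA)} + 4\sqrt{\nr}\,\|\hatC\|_\infty \Bigr] \|\hatx\|_2,
\]
and the bracketed factor, upper bounded using $\sigma \widehat{\zeta}\, \kappa_2 < 1$, defines the constant $C_s$ in \eqref{eq:NoiseBound}. I do not expect a real obstacle here: Lemma~\ref{LEM:Lin-Sys-Pert-Prob} does the heavy lifting (both the concentration argument and the linear-system perturbation analysis), and the remaining work is algebraic bookkeeping of the splitting $\tC\tx - \hatC\hatx$ and careful tracking that all the randomness-dependent bounds used live on the same event of probability at least $1 - 4\exp(-n/2)$, so no further union bound is incurred.
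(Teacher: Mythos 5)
Your proof is correct, and it reaches the bound by a slightly different — and in fact leaner — route than the paper. Both arguments split the error into a linear-system-perturbation term plus an output-perturbation term and then invoke Lemma~\ref{LEM:Lin-Sys-Pert-Prob} together with the concentration bounds already established inside its proof, but the add-and-subtract is done differently. The paper writes $\hatH(s)-\tH(s)=\hatC\bigl((s\hatE-\hatA)^{-1}\hatB-(s\tE-\tA)^{-1}\tB\bigr)-\sigma\,\delta C\,(s\tE-\tA)^{-1}\tB$, so its second term involves the \emph{perturbed} solution; controlling it forces the paper to bound $\|(s\tE-\tA)^{-1}\|_2$ via the Neumann expansion \eqref{eq:Neumann-series}--\eqref{eq:Proof:sEABound} and to use the bound \eqref{eq:Proof:MyBoundB-1} on $\|\tB\|_2$, which is where the factor $(1+4\sigma\sqrt{n})/(1-\nu\sigma)$ in \eqref{eq:Proof:BoundOnHSecond} comes from. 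You instead write $\tC(\tx-\hatx)+\sigma\,\delta C\,\hatx$, so your second term only sees the deterministic vector $\hatx=(s\hatE-\hatA)^{-1}\hatB$ and the Neumann-series step disappears entirely; the price is the trivial estimate $\|\tC\|_2\le\|\hatC\|_2+4\sigma\sqrt{n}\,\|\hatC\|_\infty$, which lives on the same event (you are right that no extra union bound is incurred, since the event of Lemma~\ref{LEM:Lin-Sys-Pert-Prob} is exactly the intersection of the $\epsilon$- and $\eta$-concentration events on which \eqref{eq:Proof:MyBoundC} also holds). One shared imprecision worth being aware of: your final bracket, like the paper's \eqref{eq:Proof:BoundOnHSecond}, still contains $\sigma$ through the denominator $1-\sigma\widehat\zeta\,\kappa_2(s\hatE-\hatA)$, which is positive under \eqref{eq:Noise:Cond} but not uniformly bounded away from zero over the whole admissible range of $\sigma$; to get a $C_s$ truly independent of $\sigma$ one should either fix $\sigma$ or restrict to $\sigma\widehat\zeta\,\kappa_2\le c<1$. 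This is not a defect of your argument relative to the paper's — it is present in both.
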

\begin{proof}
Consider now
\begin{align*}
	& \hatH(s) - \tH(s) =  \hatC (s\hatE - \hatA)^{-1}\hatB - \tC (s\tE - \tA)^{-1}\tB\\
	& =  \hatC  \left((s\hatE - \hatA)^{-1}\hatB - (s\tE- \tA)^{-1}\tB\right) - \sigma\delta C (s\tE - \tA)^{-1}\tB
\end{align*}
		and take the absolute value to obtain
		\begin{equation}
	|\hatH(s) - \tH(s)| \leq
	c_1 \frac{\zeta}{1 - \zeta \kappa_2(s \hatE - \hatA)} + |\sigma\delta C (s\tE - \tA)^{-1}\tB|  ,
	\label{eq:Proof:FirstBoundOnH}
	\end{equation}
		where we invoked \eqref{eq:Proof:SystemOfLinearPert} with
		\begin{equation}
		c_1 = 2\|\hatC\|_2 \|(s\hatE - \hatA)^{-1}\hatB\|_2 \kappa_2(s \hatE - \hatA)
		\label{eq:C1}
		\end{equation}
		 and $\zeta$ set as in \eqref{eq:Proof:MyZeta}. Note that \eqref{eq:Proof:SystemOfLinearPert} holds with probability at least $1 - 4\exp(-\nr/2)$, and thus \eqref{eq:Proof:FirstBoundOnH} holds with the same probability.

		We now bound $\|(s\tE - \tA)^{-1}\|_2$ in probability. Consider the Neumann expansion
		\begin{multline}
		(s\tE - \tA)^{-1} = \left(s\hatE - \hatA + \sigma\left(s\delta E - \delta A\right)\right)^{-1} \\
		= (s \hatE - \hatA)^{-1}\sum_{i = 0}^{\infty} (-1)^i \sigma^i \left((s \delta E - \delta A)(s\hatE - \hatA)^{-1}\right)^i\,,\label{eq:Neumann-series}
		\end{multline}
	where the series  converges to the inverse of $I +  \sigma (s \delta E - \delta A)(s\hatE - \hatA)^{-1}$ provided that\footnote{The necessary and sufficient condition for the convergence is that the spectral radius of $\sigma (s \delta E - \delta A)(s\hatE - \hatA)^{-1}$  is strictly less than one.} $\|\sigma (s \delta E - \delta A)(s\hatE - \hatA)^{-1}\|_2 < 1$. Because \eqref{eq:Proof:CondDetBoundA} holds with probability at least $1 - 4\exp(-\nr/2)$, we obtain that with the same probability of at least $1 - 4\exp(-\nr/2)$
		\begin{displaymath}
		\|\sigma (s \delta E - \delta A)(s\hatE - \hatA)^{-1}\|_2
		\leq \zeta {\| s\hatE - \hatA \|_2}{\|(s\hatE - \hatA)^{-1}\|_2} < 1,
		\end{displaymath}
		 holds, where we used assumption \eqref{eq:Noise:Cond} in the second inequality. Note that the second inequality is strict.
		 Set
		\[
		\nu = 4\sqrt{\nr}(\|F_E\|_2 + \|F_A\|_2)\|(s\hatE - \hatA)^{-1}\|_2\,,
		\]
		and obtain with (\ref{eq:Proof:CondDetBoundA}), (\ref{eq:Neumann-series}), and $\sigma\nu < 1$ because of (\ref{eq:Noise:Cond}) that
		\begin{equation}
		\|(s\tE - \tA)^{-1}\|_2 \leq \|(s\hatE - \hatA)^{-1}\|_2 \sum_{i = 0}^{\infty}  (\nu\sigma)^i = \|(s\hatE - \hatA)^{-1}\|_2 \frac{1}{1 - \nu\sigma}
		\label{eq:Proof:sEABound}
		\end{equation}
		holds with probability at least $1 - 4\exp(-\nr/2)$.

		Then, we obtain the bound
		\begin{align}
		|\sigma \delta C (s\tE - \tA)^{-1} \tB| \leq & \sigma \|\delta C\|_2 \|(s \tE - \tA)^{-1} \|_2 \|\tB\|_2\notag\\
		\leq & 4 \sqrt{\nr} \|\hatC\|_{\infty}\|\hatB\|_2 \|(s\hatE - \hatA)^{-1}\|_2 \frac{\sigma (1+\sigma 4 \sqrt{n})}{1 - \nu\sigma}\notag\\
		\leq & c_2 \frac{\sigma+\sigma^2 4 \sqrt{n}}{1 - \nu \sigma} , \label{eq:Proof:BoundOnHTerm2-1}
		\end{align}
		where we used \eqref{eq:Proof:MyBoundB-1}, \eqref{eq:Proof:MyBoundC} and \eqref{eq:Proof:sEABound}, which together hold with probability at least $1 - 4\exp(-\nr/2)$, and we set $c_2 = 4 \sqrt{\nr} \|\hatC\|_{\infty}\|\hatB\|_2 \|(s\hatE - \hatA)^{-1}\|_2$.

		We obtain with \eqref{eq:Proof:FirstBoundOnH}, \eqref{eq:Proof:BoundOnHTerm2-1} the following bound
		\begin{equation}
		|\hatH(s) - \tH(s)| \leq  \sigma \left[ \frac{c_1 \widehat{\zeta}}{1 -\sigma\widehat{ \zeta }\kappa_2(s \hatE - \hatA)} +  \frac{c_2(1+\sigma 4\sqrt{n})}{1 - \nu \sigma}\right]\,,
		\label{eq:Proof:BoundOnHSecond}
		\end{equation}
		which grows at most linearly in $\sigma$ as long as condition \eqref{eq:Noise:Cond} is satisfied, which shows \eqref{eq:NoiseBound}.
		\end{proof}

\begin{corollary}
Under the same conditions as Theorem~\ref{thm:Noise} and for $s\in\Omega$ that are not zeros of $\hatH$, 		\begin{equation}
		\frac{|\tH(s) - \hatH(s)|}{|\hatH(s)|} \leq C_s \sigma
		\label{eq:NoiseBoundRelative}
		\end{equation}
		holds with probability at least $1 - 4\exp(-n/2)$ and constant $C_s > 0$ that may depend on $s$.
\end{corollary}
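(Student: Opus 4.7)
The plan is to reduce the corollary to Theorem~\ref{thm:Noise} by a simple rescaling, since the only difference between \eqref{eq:NoiseBound} and \eqref{eq:NoiseBoundRelative} is division by $|\hatH(s)|$. Since $s\in\Omega$ and $\Omega$ excludes the poles of $\hatH$, the value $\hatH(s)$ is finite and well-defined; the additional hypothesis that $s$ is not a zero of $\hatH$ gives $|\hatH(s)|>0$. Thus $|\hatH(s)|$ is a strictly positive, $s$-dependent but $\sigma$-independent scalar that is also independent of the noise realization $(\bfepsilon,\bfeta)$.

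On the event of probability at least $1 - 4\exp(-n/2)$ on which the conclusion \eqref{eq:NoiseBound} of Theorem~\ref{thm:Noise} holds, divide both sides by $|\hatH(s)|$ to obtain
\[
\frac{|\tH(s) - \hatH(s)|}{|\hatH(s)|} \;\leq\; \frac{C_s}{|\hatH(s)|}\,\sigma \;=\; C_s'\,\sigma,
\]
where $C_s' := C_s/|\hatH(s)|$ depends only on $s$. Renaming $C_s'$ to $C_s$ (as the statement of the corollary allows an $s$-dependent constant that need not coincide with the one from the theorem) yields \eqref{eq:NoiseBoundRelative}. The probability bound is inherited verbatim from Theorem~\ref{thm:Noise} because the event in question is exactly the same — no additional randomness has been introduced.

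There is no substantive obstacle. The only point deserving explicit mention is that the hypothesis excluding zeros of $\hatH$ is essential to make the left-hand side of \eqref{eq:NoiseBoundRelative} finite; without it, the relative error blows up at zeros of $\hatH$ irrespective of $\sigma$, and no bound of this form can hold.
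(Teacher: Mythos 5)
Your proposal is correct and coincides with the paper's own primary justification: the paper's proof opens by observing that since $\hatH(s)$ is independent of $\sigma$, dividing \eqref{eq:NoiseBound} by $|\hatH(s)|$ suffices when $s$ is not a zero of $\hatH$. The paper then re-derives the bound a second time only to expose a geometric interpretation of the constant $C_s$ in terms of $\kappa_2(s\hatE-\hatA)$ and the angle between $\hatC^*$ and $(s\hatE-\hatA)^{-1}\hatB$, which your one-line rescaling does not provide but is not needed for the statement itself.
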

\begin{proof}
Note that $\hatH(s)$ is independent of $\sigma$ and thus dividing \eqref{eq:NoiseBound} by $|\hatH(s)|$ is sufficient to show \eqref{eq:NoiseBoundRelative} if $s$ is not a zero of $\hatH$. To highlight a geometric interpretation of \eqref{eq:NoiseBoundRelative}, we show \eqref{eq:NoiseBoundRelative} via a different approach.
	Since $|\hatH(s)| = \|\hatC\|_2 \|(s \hatE - \hatA)^{-1}\hatB\|_2 |\cos\angle(\hatC^*,(s \hatE - \hatA)^{-1}\hatB)|$, the factor $c_1$ defined in \eqref{eq:C1} can be interpreted  as
	$$
	c_1 = |\hatH(s)| \frac{2\kappa_2(s \hatE - \hatA)}{|\cos\angle(\hatC^*,(s \hatE - \hatA)^{-1}\hatB)|}
	$$
	so that the first term on the right-hand side in \eqref{eq:Proof:BoundOnHSecond} contains the bound on the relative error $|\hatH(s) - \tH(s)|/|\hatH(s)|$ with the two natural condition numbers $\kappa_s(s\hatE - \hatA)$ and $|\cos\angle(\hatC^*,(s \hatE - \hatA)^{-1}\hatB)|$. The interpretation is as follows: Evaluating $\tH$ essentially means solving a perturbed linear system $(s\tE - \tA)^{-1}\tB$ and then computing an inner product $(\tC^*)^*((s\tE - \tA)^{-1}\tB)$ with a perturbed vector $\tC^*$. The sensitivity of the solution of a system of equations to perturbations is quantified by the condition number $\kappa_2(s\hatE - \hatA)$ and the sensitivity of the inner product is quantified by $|\cos\angle(\hatC^*,(s \hatE - \hatA)^{-1}\hatB)|$. Similarly, the second term on the right-hand side in  \eqref{eq:Proof:BoundOnHSecond} can be modified as follows:
    Instead of \eqref{eq:Proof:BoundOnHTerm2-1}, estimate the second term in \eqref{eq:Proof:FirstBoundOnH} as
    \begin{multline}
     |\sigma \delta C (s\tE - \tA)^{-1} \tB| \leq \sigma 4\sqrt{n} \|\hatC\|_{\infty} \| (s \hatE - \hatA)^{-1}\hatB\|_2 \left(1 + \frac{2\zeta\kappa_2(s\hatE - \hatA) }{1 - \zeta \kappa_2(s\hatE - \hatA)}\right) \nonumber \\
    = \sigma |\hatH(s)| \frac{4\sqrt{n}}{|\cos\angle(\hatC^*,(s \hatE - \hatA)^{-1}\hatB)|}\frac{1+\zeta\kappa_2(s\hatE - \hatA) }{1 - \zeta \kappa_2(s\hatE - \hatA)} \frac{\|\hatC\|_{\infty}}{\|\hatC\|_{2}}\,.
    \end{multline}
    Hence, because $\hatH(s)\neq 0$ per assumption, we can write
    \begin{equation}
        \frac{|\hatH(s) - \tH(s)|}{|\hatH(s)|} \leq \sigma\left(
        \frac{1}{|\cos\vartheta_s|}\left( \frac{2\widehat{\zeta}\kappa_2^{(s)}}{1-\sigma\widehat{\zeta}\kappa_2^{(s)}} + 4\sqrt{n}\frac{1+\zeta \kappa_2^{(s)}}{1 - \zeta \kappa_2^{(s)}} \frac{\|\hatC\|_{\infty}}{\|\hatC\|_{2}} \right)\right)
    \end{equation}
    where $\vartheta_s = \angle(\hatC^*,(s \hatE - \hatA)^{-1}\hatB)$ and $\kappa_2^{(s)} = \kappa_2(s\hatE - \hatA)$.
	\end{proof}

\subsection{Remarks on Theorem~\ref{thm:Noise}}
\label{sec:Remarks}
The following remarks are in order. First, note that condition \eqref{eq:Noise:Cond} implies that $\tH(s)$ exists with probability at least $1 - 4\exp(-\nr/2)$. Second, a similar result as in Theorem~\ref{thm:Noise} can be shown for an absolute noise model, i.e., where the noisy transfer-function values $\tH(s)$  have a standard deviation that is independent of the transfer-function value $H(s)$. The major change is that matrices $F_A$ and $F_E$ defined in \eqref{eq:sEADeltaMat} are independent of the transfer-function values when an absolute noise model is used and thus condition \eqref{eq:Noise:Cond} holds for a different range of standard deviations $\sigma$ than for a relative noise model. Third, condition \eqref{eq:Noise:Cond} in Theorem~\ref{thm:Noise} depends on the scaling of the entries of the system matrices $\hatE, \hatA, \hatB$, and $\hatC$. Consider two nonsingular matrices $D_1$ and $D_2$ of size $\nr \times \nr$. Then, the Loewner model given by $\hatE, \hatA, \hatB, \hatC$, derived from noiseless transfer-function values, can be transformed as
	\[
	\breve{E} = D_1\hatE D_2,\quad \breve{A} = D_1 \hatA D_2,\quad \breve{B} = D_1 \hatB,\quad \breve{C} = \hatC D_2\,,
	\]
	with transfer function $\breve{H}(s) = \breve{C}(s\breve{E} - \breve{A})^{-1}\breve{B}$. It holds $\breve{H}(s) = \hatH(s)$ for $s \in \mathbb{C}$; however, the condition number $\kappa_2(s\hatE - \hatA)$ is \emph{not} invariant under the transformation given by $D_1$ and $D_2$, which means that condition \eqref{eq:Noise:Cond} in Theorem~\ref{thm:Noise} is not invariant if the system is transformed. A component-wise analysis \cite[Section~7.2]{HighamBook} could be an option to derive a version of Theorem~\ref{thm:Noise} that is invariant under diagonal linear transformations $D_1$ and $D_2$. Fourth, condition \eqref{eq:Noise:Cond} in Theorem~\ref{thm:Noise} depends on the interpolation points $s_1, \dots, s_{2\nr}$ and on the partition $\{\mu_1, \dots, \mu_{\nr}\}, \{\gamma_1, \dots, \gamma_{\nr}\}$, which shows that the interpolation points and their partition can influence the robustness of the Loewner approach to noise; cf., e.g., \cite[Section~2.1]{CosminsThesis}. Our numerical results demonstrate that different choices of interpolation points indeed influence condition \eqref{eq:Noise:Cond} and thus when the linear growth of the error \eqref{eq:NoiseBound} with the standard deviation $\sigma$ of the noise holds. The numerical results below suggest that taking interpolation points that keep the condition number of $s \hatE - \hatA$ low seems reasonable. However, additional analyses and numerical experiments are necessary before one can give a definitive recommendation. The issue of selection and partition of the interpolation points is an important challenging problem that will be addressed in our future work.

\section{Numerical results}
In this section, we demonstrate the error bound of  Theorem~\ref{thm:Noise} on numerical experiments with some well known benchmark examples.

\subsection{CD player}
The system of a CD player is a common benchmark problem for model reduction and can be downloaded from the SLICOT website \footnote{http://slicot.org/20-site/126-benchmark-examples-for-model-reduction}.

\subsubsection{Problem setup}
\label{sec:NumExp:CDPlayer:Setup}
We consider single-input-single-output (SISO) systems and therefore we use only the first input and the second output of the CD-player system. The frequency range is $[2\pi, 200\pi]$, which contains some of the major dynamics of the CD-player system. The order of the system is $\nh = 120$. To derive a Loewner model of order $\nr$, we select $2\nr$ interpolation points as follows. First, $\nr$ points $w_1, \dots, w_{\nr}$ are selected logarithmically equidistant in the range $[2\pi, 200\pi]$ on the imaginary axis. Then, $s_{i} = (-1)^iw_i$ for $i = 1, \dots, \nr$ and $s_{\nr + i} = (-1)^{i + 1}w_i$. The first $n$ points $s_1, \dots, s_{\nr}$ are put into the set $\{\mu_1, \dots, \mu_{\nr}\}$ and the following $n$ points $s_{\nr + 1}, \dots, s_{2\nr}$ are put into $\{\gamma_1, \dots, \gamma_{\nr}\}$. To test the Loewner models, we select 200 test points $s_1^{\text{test}}, \dots, s_{200}^{\text{test}}$ on the imaginary axis in the range $[2\pi, 200\pi]$.

From the noiseless transfer-function values $H(\mu_1), \dots, H(\mu_{\nr})$ and $H(\gamma_1), \dots, H(\gamma_{\nr})$ a Loewner model is derived with transfer function $\hatH$. Then, the transfer-function values are polluted with noise with standard deviation $\sigma$ as described in Section~\ref{sec:Noise:Random} and a noisy Loewner model is derived with transfer function $\tH_{\sigma}$. Note that we now explicitly denote standard deviation as subscript in the transfer functions of noisy Loewner models.

\begin{figure}[t]
\centering
\begin{tabular}{cc}
\resizebox{0.48\columnwidth}{!}{\LARGE\input{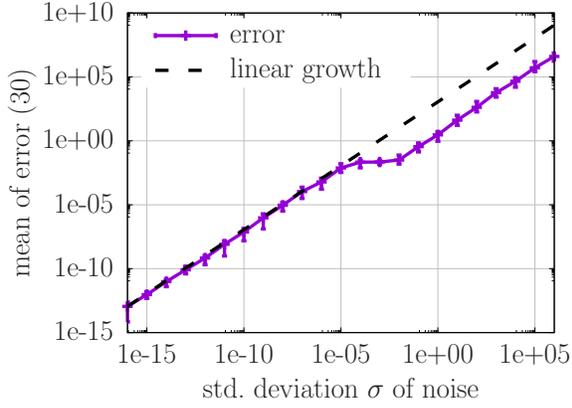}} & \resizebox{0.48\columnwidth}{!}{\LARGE\input{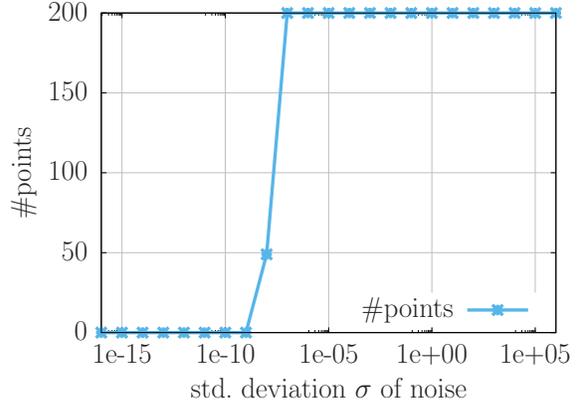}}\\
(a) error, dimension $\nr = 20$ & (b) assumption \eqref{eq:Noise:Cond} violated, dimension $\nr = 20$\\
\resizebox{0.48\columnwidth}{!}{\LARGE\input{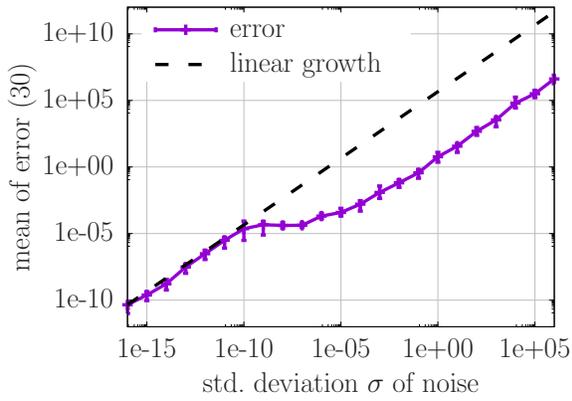}} & \resizebox{0.48\columnwidth}{!}{\LARGE\input{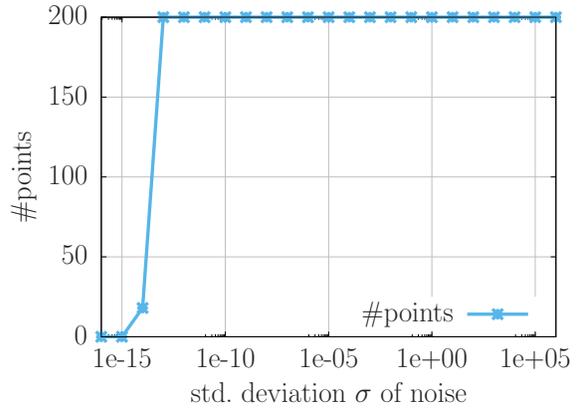}}\\
(c) error, dimension $\nr = 28$ & (d) assumption \eqref{eq:Noise:Cond} violated, dimension $\nr = 28$
\end{tabular}
\caption{CD player: Plots (a) and (c) show the growth of the mean of error \eqref{eq:NumExp:Error} over 10 replicates of independent noise samples. Plots (b) and (d) show the number of test points for which condition \eqref{eq:Noise:Cond} of Theorem~\ref{thm:Noise} is violated. The mean of the error \eqref{eq:NumExp:Error} grows linearly with $\sigma$ as long as condition \eqref{eq:Noise:Cond} is satisfied, which is in agreement with Theorem~\ref{thm:Noise}. The error bars in (a) and (c) show the minimum and maximum of the error \eqref{eq:NumExp:Error} over the 10 replicates of the noise samples.}
\label{fig:CDPlayer:Error}
\end{figure}

\subsubsection{Results}
We consider the error
\begin{equation}
e(\sigma) = \frac{1}{200}\sum_{i = 1}^{200} |\hatH(s_i^{\text{test}}) - \tH_{\sigma}(s_i^{\text{test}})|\,,
\label{eq:NumExp:Error}
\end{equation}
which is an average of the error \eqref{eq:NoiseBound} over all 200 test points. Figure~\ref{fig:CDPlayer:Error}a shows the mean of $e(\sigma)$ over 10 replicates of independent noise samples. The standard deviation $\sigma$ is in the range $[10^{-15}, 10^5]$ and the dimension is $\nr = 20$. The error bars in Figure~\ref{fig:CDPlayer:Error}a show the minimum and maximum of $e(\sigma)$ over the 10 replicates. A linear growth of the mean of error \eqref{eq:NumExp:Error} with the standard deviation $\sigma$ is observed for $\sigma < 10^{-5}$. Figure~\ref{fig:CDPlayer:Error}b shows the number of test points that violate condition \eqref{eq:Noise:Cond} of Theorem~\ref{thm:Noise}. The results indicate that for $\sigma \geq 10^{-7}$ condition \eqref{eq:Noise:Cond} is violated for all 200 test points, which seems to align with Figure~\ref{fig:CDPlayer:Error}a that shows a linear growth for $\sigma < 10^{-5}$. Thus, the results in Figure~\ref{fig:CDPlayer:Error}a are in agreement with Theorem~\ref{thm:Noise}. Similar observations can be made for $\nr = 28$ in Figure~\ref{fig:CDPlayer:Error}c and Figure~\ref{fig:CDPlayer:Error}d.

\subsection{Penzl}
The Penzl system is a benchmark problem that has been introduced in \cite[Example 3]{Penzl2006322} and is used in, e.g., \cite{CosminsThesis,PSW16TLoewner} to demonstrate the Loewner approach.

\subsubsection{Problem setup}
The Penzl system is of order $\nh = 1006$ and we consider the frequency range $[10, 1000]$. We consider two different sets of interpolation points in this example. First, we select $\nr$ logarithmically equidistant points on the imaginary axis in the range $[10, 1000]$ and include their complex conjugates as described in Section~\ref{sec:NumExp:CDPlayer:Setup}. We denote the corresponding set of $2\nr$ interpolation points as $\mathcal{E}_{\nr}$.
Second, we select points randomly in the imaginary plane in the range $[10, 1000] \times \bfiota[10, 1000]$, where $\bfiota$ is the complex unit $\bfiota = \sqrt{-1}$. To select the points randomly, we first select $10^6$ logarithmically equidistant points $w_1, \dots, w_{10^6}$ in $[10, 1000]$ and then draw uniformly $2\nr$ points from $w_1, \dots, w_{10^6}$ for the real and imaginary parts of the $\nr$ interpolation points $s_1, \dots, s_{\nr}$. Then, their complex conjugates are $s_{\nr + i} = \bar{s}_i$ for $i = 1, \dots, \nr$. The interpolation points are partitioned into the two sets $\{\mu_1, \dots, \mu_{\nr}\}$ and $\{\gamma_1, \dots, \gamma_{\nr}\}$ with $\mu_i = s_{2i - 1}$ and $\gamma_i = s_{2i}$ for $i = 1, \dots, \nr$ such that complex pairs are in the same set. The set of interpolation points is denoted as $\mathcal{R}_{\nr}$.
The test points are 200 points selected on the imaginary axis in the range $[10, 1000]$.

\subsubsection{Results}
Figure~\ref{fig:Penzl:Error}a shows the error \eqref{eq:NumExp:Error} corresponding to the Loewner model with interpolation points $\mathcal{R}_{\nr}$ (random) and of dimension $\nr = 16$. Figure~\ref{fig:Penzl:Error}c shows that condition \eqref{eq:Noise:Cond} is violated for all test points and all standard deviations $\sigma$ in the range $[10^{-15}, 10^5]$, which means that Theorem~\ref{thm:Noise} is not applicable. In contrast, Figure~\ref{fig:Penzl:Error}b shows a linear growth for the error \eqref{eq:NumExp:Error} corresponding to the models learned from logarithmically equidistant points $\mathcal{E}_{\nr}$. Figure~\ref{fig:Penzl:Error}d indicates that up to $\sigma = 10^{-10}$ the condition \eqref{eq:Noise:Cond} for Theorem~\ref{thm:Noise} is satisfied, which explains the linear growth for $\sigma \leq 10^{-10}$. For $\sigma > 10^{-10}$, Theorem~\ref{thm:Noise} is not applicable, even though a linear growth is observed, which demonstrates that Theorem~\ref{thm:Noise} is rather pessimistic in this example. Figure~\ref{fig:Penzl:TF} shows the magnitude of the transfer functions for $\sigma = 10^{-6}$ and demonstrates that the logarithmically equidistant points $\mathcal{E}_{\nr}$ seem to provide more robustness against noise than the random points $\mathcal{R}_{\nr}$ in this example. The numerical observations seem to be in alignment with Theorem~\ref{thm:Noise} because assumption \eqref{eq:Noise:Cond} is violated for interpolation points $\mathcal{R}_{\nr}$ for all $\sigma$ considered in Figure~\ref{fig:Penzl:Error}c, whereas assumption \eqref{eq:Noise:Cond} is satisfied for small $\sigma$ for points $\mathcal{E}_{\nr}$. Note that the condition number of $s \hatE - \hatA$ plays a critical role in the assumption \eqref{eq:Noise:Cond}. Thus, it seems that taking interpolation points that keep the condition number of $s \hatE - \hatA$ low is reasonable. However, additional analyses and numerical experiments are necessary before one can give a definitive recommendation.

\begin{figure}
\centering
\begin{tabular}{cc}
\resizebox{0.48\columnwidth}{!}{\LARGE\input{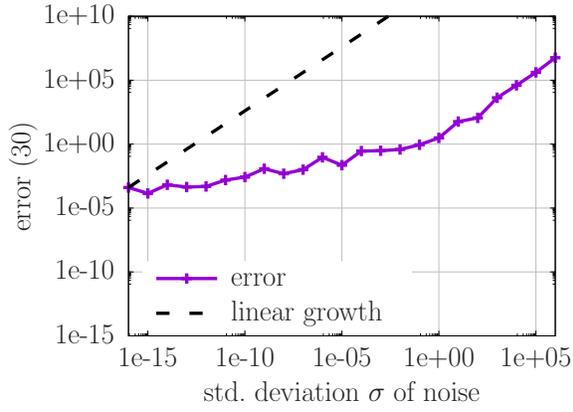}} & \resizebox{0.48\columnwidth}{!}{\LARGE\input{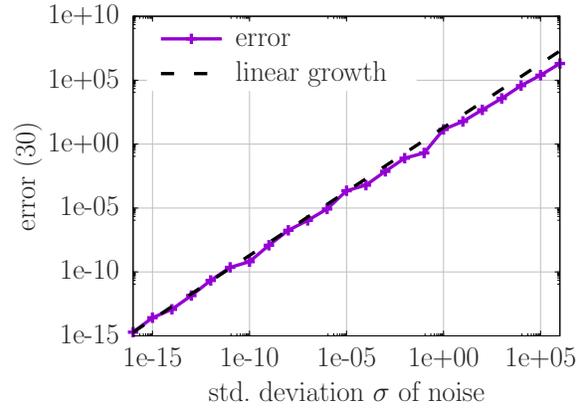}}\\
(a) error, interpolation points $\mathcal{R}_{\nr}$ & (b) error, interpolation points $\mathcal{E}_{\nr}$\\
\resizebox{0.48\columnwidth}{!}{\LARGE\input{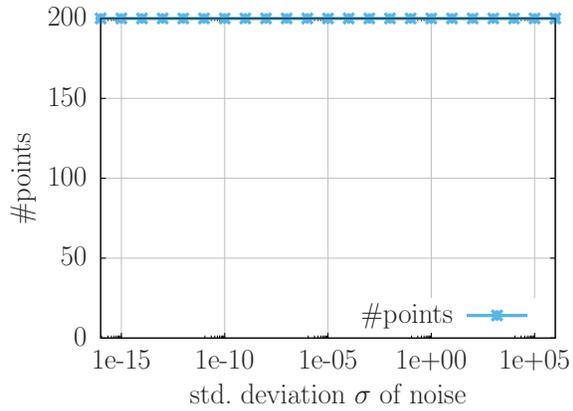}} & \resizebox{0.48\columnwidth}{!}{\LARGE\input{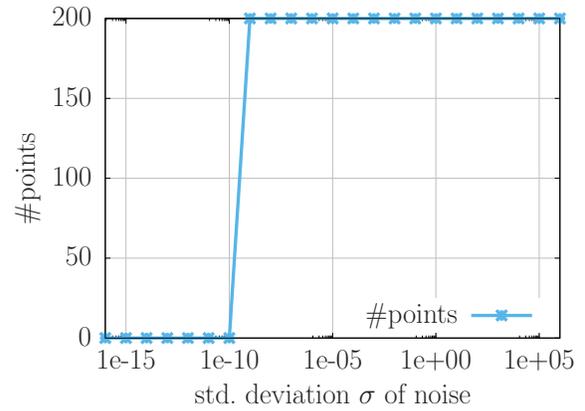}}\\
(c) assumption \eqref{eq:Noise:Cond}, interpolation points $\mathcal{R}_{\nr}$ & (d) assumption \eqref{eq:Noise:Cond}, interpolation points $\mathcal{E}_{\nr}$
\end{tabular}
\caption{Penzl: The choice of the interpolation points can have a significant effect on the robustness of the Loewner approach to noise. Plots (a) and (c) show results for points selected randomly and plots (b) and (d) for points selected logarithmically equidistantly. Learning Loewner models from the logarithmically equidistant points seems to be more robust in this example than learning from the randomly selected points.}
\label{fig:Penzl:Error}
\end{figure}

\begin{figure}
\centering
\begin{tabular}{l}
\resizebox{1\columnwidth}{!}{\LARGE\input{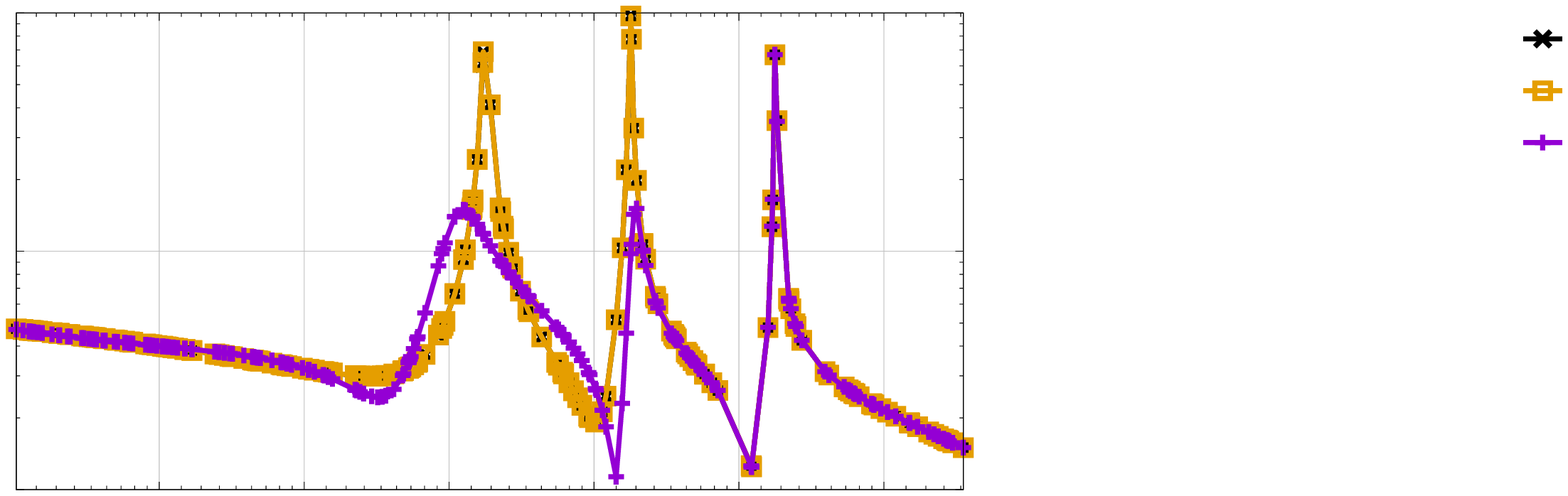}}\\
(a) Models learned from randomly selected points\\
\resizebox{1\columnwidth}{!}{\LARGE\input{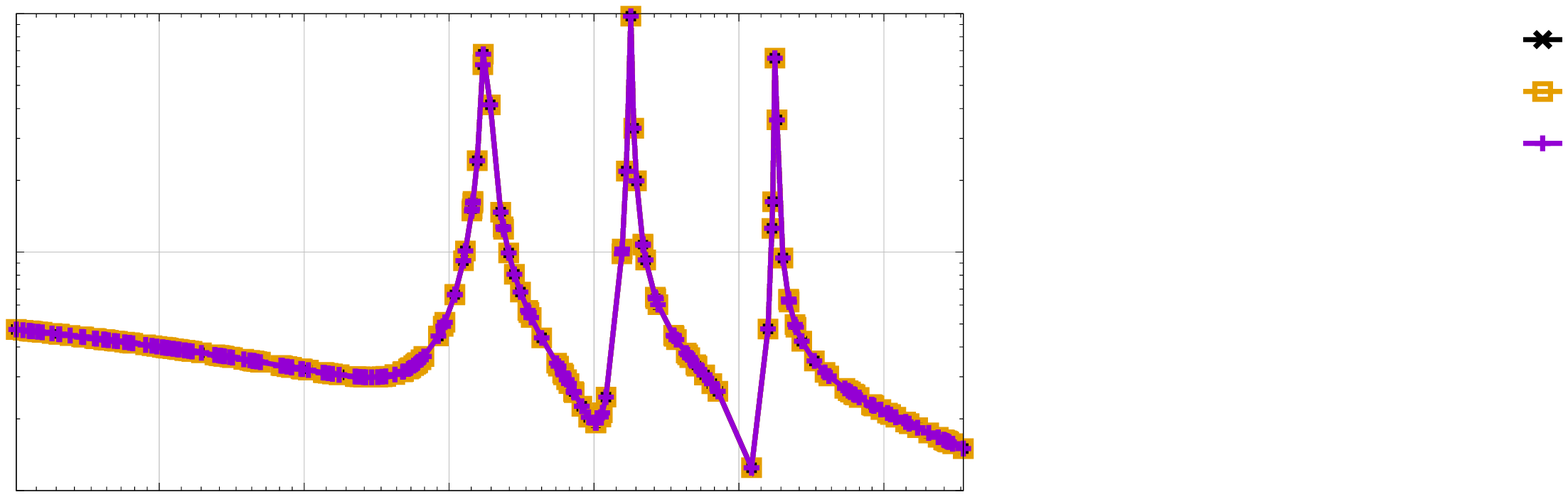}}\\
(b) Models learned from logarithmically equidistant points
\end{tabular}
\caption{Penzl: Plots (a) and (b) show the magnitude of the transfer function of Loewner models learned from noiseless transfer-function values and from transfer-function values polluted with noise with standard deviation $\sigma = 10^{-6}$. The interpolation points are randomly selected in (a) and logarithmically equidistant in (b);  compare with Figure \ref{fig:Penzl:Error}.}
\label{fig:Penzl:TF}
\end{figure}

\section*{Acknowledgements}
Drma\v{c} was supported in parts by the DARPA  Contract HR0011-18-9-0033, the ONR Contract N00014-19-C-1053,  and the Croatian Science Foundation through Grant IP-2019-04-6268 \emph{``Randomized low rank algorithms and applications to parameter dependent problems."} Peherstorfer was partially supported by US Department of Energy, Office of Advanced Scientific Computing Research, Applied Mathematics Program (Program Manager Dr. Steven Lee), DOE Award DESC0019334 and by the National Science Foundation under Grant No.~1901091. The numerical experiments were computed with support through the NYU IT High Performance Computing resources, services, and staff expertise.

\bibliography{noiseloew}

\begin{thebibliography}{10}

\bibitem{NewFIR}
K.~Abderrahim, H.~Mathlouthi, and F.~Msahli.
\newblock New approaches to finite impulse response systems identification
  using higher-order statistics.
\newblock {\em IET Signal Processing}, 4(5):488--501, 2010.

\bibitem{ANTOULAS01011986}
A.~Antoulas and B.~D.~O. Anderson.
\newblock On the scalar rational interpolation problem.
\newblock {\em IMA Journal of Mathematical Control \& Information},
  3(2-3):61--88, 1986.

\bibitem{AntBG10}
A.~Antoulas, C.~Beattie, and S.~Gugercin.
\newblock Interpolatory model reduction of large-scale dynamical systems.
\newblock In J.~Mohammadpour and K.~Grigoriadis, editors, {\em Efficient
  Modeling and Control of Large-Scale Systems}. Springer-Verlag, 2010.

\bibitem{doi:10.1137/15M1041432}
A.~Antoulas, I.~Gosea, and A.~Ionita.
\newblock Model reduction of bilinear systems in the {Loewner} framework.
\newblock {\em SIAM Journal on Scientific Computing}, 38(5):B889--B916, 2016.

\bibitem{BeaG12}
C.~Beattie and S.~Gugercin.
\newblock {Realization-independent $\mathcal{H}_2$-approximation}.
\newblock In {\em Proc. IEEE Conf. Decis. Control}, pages 4953--4958, Maui, HI,
  USA, 2012.

\bibitem{BEATTIE20122916}
C.~Beattie, S.~Gugercin, and S.~Wyatt.
\newblock Inexact solves in interpolatory model reduction.
\newblock {\em Linear Algebra and its Applications}, 436(8):2916 -- 2943, 2012.
\newblock Special Issue dedicated to Danny Sorensen's 65th birthday.

\bibitem{MORSurveySIREV}
P.~Benner, S.~Gugercin, and K.~Willcox.
\newblock A survey of projection-based model reduction methods for parametric
  dynamical systems.
\newblock {\em SIAM Review}, 57(4):483--531, 2015.

\bibitem{Brunton12042016}
S.~L. Brunton, J.~L. Proctor, and J.~N. Kutz.
\newblock Discovering governing equations from data by sparse identification of
  nonlinear dynamical systems.
\newblock {\em Proceedings of the National Academy of Sciences},
  113(15):3932--3937, 2016.

\bibitem{DDMD-SISC-2018}
Z.~{Drma{\v c}}, I.~{Mezi{\'c}}, and R.~{Mohr}.
\newblock {Data driven modal decompositions: analysis and enhancements}.
\newblock {\em SIAM Journal on Scientific Computing}, 40(4):A2253--A2285, 2018.

\bibitem{VectorFitting}
Z.~Drma\v{c}, S.~Gugercin, and C.~Beattie.
\newblock Quadrature-based vector fitting for discretized $\mathcal{H}_2$
  approximation.
\newblock {\em SIAM Journal on Scientific Computing}, 37(2):A625--A652, 2015.

\bibitem{doi:10.1137/15M1010774}
Z.~Drma\v{c}, S.~Gugercin, and C.~Beattie.
\newblock Vector fitting for matrix-valued rational approximation.
\newblock {\em SIAM Journal on Scientific Computing}, 37(5):A2346--A2379, 2015.

\bibitem{embree2019pseudospectra}
M.~Embree and A.~C. Ionita.
\newblock Pseudospectra of {Loewner} matrix pencils.
\newblock {\em arXiv:1910.12153}, pages 1--18, 2019.

\bibitem{doi:10.1002/nla.2200}
I.~V. Gosea and A.~C. Antoulas.
\newblock Data-driven model order reduction of quadratic-bilinear systems.
\newblock {\em Numerical Linear Algebra with Applications}, 25(6):e2200, 2018.

\bibitem{gugercin_2008}
S.~Gugercin, A.~Antoulas, and C.~Beattie.
\newblock $\mathcal{H}_2$ model reduction for large-scale linear dynamical
  systems.
\newblock {\em SIAM Journal on Matrix Analysis and Applications},
  30(2):609--638, Jan. 2008.

\bibitem{772353}
B.~Gustavsen and A.~Semlyen.
\newblock Rational approximation of frequency domain responses by vector
  fitting.
\newblock {\em Power Delivery, IEEE Transactions on}, 14(3):1052--1061, Jul
  1999.

\bibitem{HighamBook}
N.~J. Higham.
\newblock {\em Accuracy and Stability of Numerical Algorithms}.
\newblock SIAM, 2002.

\bibitem{2018arXiv180300043H}
J.~M. Hokanson.
\newblock A data-driven {McMillan} degree lower bound.
\newblock {\em SIAM Journal on Scientific Computing}, 42(5):A3447--A3461, 2020.

\bibitem{doi:10.1137/130914619}
A.~Ionita and A.~Antoulas.
\newblock Data-driven parametrized model reduction in the {Loewner} framework.
\newblock {\em SIAM Journal on Scientific Computing}, 36(3):A984--A1007, 2014.

\bibitem{CosminsThesis}
A.~C. Ionita.
\newblock {\em Lagrange rational interpolation and its applications to
  approximation of large-scale dynamical systems}.
\newblock PhD thesis, Rice University, 2013.

\bibitem{LoewnerNonInt}
A.~C. Ionita and A.~C. Antoulas.
\newblock {\em Matrix pencils in time and frequency domain system
  identification}, pages 79--88.
\newblock Control, Robotics and Sensors. Institution of Engineering and
  Technology, 2012.

\bibitem{8550216}
P.~{Kergus}, S.~{Formentin}, C.~{Poussot-Vassal}, and F.~{Demourant}.
\newblock Data-driven control design in the {Loewner} framework: Dealing with
  stability and noise.
\newblock In {\em 2018 European Control Conference (ECC)}, pages 1704--1709,
  June 2018.

\bibitem{KramerERZ}
B.~Kramer and S.~Gugercin.
\newblock Tangential interpolation-based eigensystem realization algorithm for
  {MIMO} systems.
\newblock {\em Mathematical and Computer Modelling of Dynamical Systems},
  22(4):282--306, 2016.

\bibitem{KPW16ControlAdaptROM}
B.~Kramer, B.~Peherstorfer, and K.~Willcox.
\newblock Feedback control for systems with uncertain parameters using
  online-adaptive reduced models.
\newblock {\em SIAM Journal on Applied Dynamical Systems}, 16(3):1563--1586,
  2017.

\bibitem{NathanBook}
J.~N. Kutz, S.~L. Brunton, B.~W. Brunton, and J.~L. Proctor.
\newblock {\em Dynamic mode decomposition: Data-driven modeling of complex
  systems}.
\newblock SIAM, 2016.

\bibitem{5356286}
S.~Lefteriu and A.~Antoulas.
\newblock A new approach to modeling multiport systems from frequency-domain
  data.
\newblock {\em Computer-Aided Design of Integrated Circuits and Systems, IEEE
  Transactions on}, 29(1):14--27, Jan 2010.

\bibitem{Lefteriu2010}
S.~Lefteriu, A.~C. Ionita, and A.~C. Antoulas.
\newblock Modeling systems based on noisy frequency and time domain
  measurements.
\newblock In J.~C. Willems, S.~Hara, Y.~Ohta, and H.~Fujioka, editors, {\em
  Perspectives in Mathematical System Theory, Control, and Signal Processing: A
  Festschrift in Honor of Yutaka Yamamoto on the Occasion of his 60th
  Birthday}, pages 365--378, Berlin, Heidelberg, 2010. Springer Berlin
  Heidelberg.

\bibitem{LjungBook}
L.~Ljung.
\newblock {\em System identification}.
\newblock Prentice Hall, 1987.

\bibitem{Mayo2007634}
A.~Mayo and A.~Antoulas.
\newblock A framework for the solution of the generalized realization problem.
\newblock {\em Linear Algebra and its Applications}, 425(2-3):634 -- 662, 2007.

\bibitem{Mendel}
J.~Mendel.
\newblock Tutorial on higher-order statistics (spectra) in signal processing
  and system theory: theoretical results and some applications.
\newblock {\em Proc. IEEE}, 79:278--305, 1991.

\bibitem{PReProj}
B.~Peherstorfer.
\newblock Sampling low-dimensional {Markovian} dynamics for preasymptotically
  recovering reduced models from data with operator inference.
\newblock {\em SIAM Journal on Scientific Computing}, 42(5):A3489--A3515, 2020.

\bibitem{PSW16TLoewner}
B.~Peherstorfer, S.~Gugercin, and K.~Willcox.
\newblock Data-driven reduced model construction with time-domain {Loewner}
  models.
\newblock {\em SIAM Journal on Scientific Computing}, 39(5):A2152--A2178, 2017.

\bibitem{pehersto15dynamic}
B.~Peherstorfer and K.~Willcox.
\newblock Dynamic data-driven reduced-order models.
\newblock {\em Computer Methods in Applied Mechanics and Engineering},
  291:21--41, 2015.

\bibitem{Peherstorfer16DataDriven}
B.~Peherstorfer and K.~Willcox.
\newblock Data-driven operator inference for nonintrusive projection-based
  model reduction.
\newblock {\em Computer Methods in Applied Mechanics and Engineering},
  306:196--215, 2016.

\bibitem{Peherstorfer16AdaptROM}
B.~Peherstorfer and K.~Willcox.
\newblock Dynamic data-driven model reduction: Adapting reduced models from
  incomplete data.
\newblock {\em Advanced Modeling and Simulation in Engineering Sciences},
  3(11), 2016.

\bibitem{PWG17MultiSurvey}
B.~Peherstorfer, K.~Willcox, and M.~Gunzburger.
\newblock Survey of multifidelity methods in uncertainty propagation,
  inference, and optimization.
\newblock {\em SIAM Review}, 60(3):550--591, 2018.

\bibitem{Penzl2006322}
T.~Penzl.
\newblock Algorithms for model reduction of large dynamical systems.
\newblock {\em Linear Algebra and its Applications}, 415(2-3):322 -- 343, 2006.

\bibitem{doi:10.2514/6.2019-3707}
E.~Qian, B.~Kramer, A.~N. Marques, and K.~E. Willcox.
\newblock Transform \& learn: A data-driven approach to nonlinear model
  reduction.
\newblock In {\em AIAA Aviation 2019 Forum}, 2019.

\bibitem{Qin20061502}
S.~J. Qin.
\newblock An overview of subspace identification.
\newblock {\em Computers \& Chemical Engineering}, 30(10-12):1502 -- 1513,
  2006.

\bibitem{FIRSystem}
L.~Rabiner, R.~Crochiere, and J.~Allen.
\newblock {FIR} system modeling and identification in the presence of noise and
  with band-limited inputs.
\newblock {\em IEEE Transactions on Acoustics, Speech, and Signal Processing},
  26(4):319--333, 1978.

\bibitem{Reynders2012}
E.~Reynders.
\newblock System identification methods for (operational) modal analysis:
  Review and comparison.
\newblock {\em Archives of Computational Methods in Engineering},
  19(1):51--124, 2012.

\bibitem{FLM:6837872}
C.~Rowley, I.~Mezi\'{c}, S.~Bagheri, P.~Schlatter, and D.~Henningson.
\newblock Spectral analysis of nonlinear flows.
\newblock {\em Journal of Fluid Mechanics}, 641:115--127, 12 2009.

\bibitem{Rudye1602614}
S.~H. Rudy, S.~L. Brunton, J.~L. Proctor, and J.~N. Kutz.
\newblock Data-driven discovery of partial differential equations.
\newblock {\em Science Advances}, 3(4), 2017.

\bibitem{Schaeffer6634}
H.~Schaeffer, R.~Caflisch, C.~D. Hauck, and S.~Osher.
\newblock Sparse dynamics for partial differential equations.
\newblock {\em Proceedings of the National Academy of Sciences},
  110(17):6634--6639, 2013.

\bibitem{doi:10.1137/18M116798X}
H.~Schaeffer, G.~Tran, and R.~Ward.
\newblock Extracting sparse high-dimensional dynamics from limited data.
\newblock {\em SIAM Journal on Applied Mathematics}, 78(6):3279--3295, 2018.

\bibitem{FLM:7843190}
P.~Schmid.
\newblock Dynamic mode decomposition of numerical and experimental data.
\newblock {\em Journal of Fluid Mechanics}, 656:5--28, 8 2010.

\bibitem{SchmidDMD}
P.~Schmid and J.~Sesterhenn.
\newblock Dynamic mode decomposition of numerical and experimental data.
\newblock In {\em Bull. Amer. Phys. Soc., 61st APS meeting}, page 208. American
  Physical Society, 2008.

\bibitem{SCHULZE2016125}
P.~Schulze and B.~Unger.
\newblock Data-driven interpolation of dynamical systems with delay.
\newblock {\em Systems \& Control Letters}, 97:125 -- 131, 2016.

\bibitem{SCHULZE2018250}
P.~Schulze, B.~Unger, C.~Beattie, and S.~Gugercin.
\newblock Data-driven structured realization.
\newblock {\em Linear Algebra and its Applications}, 537:250 -- 286, 2018.

\bibitem{2019arXiv190803620S}
R.~Swischuk, B.~Kramer, C.~Huang, and K.~Willcox.
\newblock Learning physics-based reduced-order models for a single-injector
  combustion process.
\newblock {\em AIAA Journal}, 58(6):2658--2672, 2020.

\bibitem{Tu2014391}
J.~H. Tu, C.~W. Rowley, D.~M. Luchtenburg, S.~L. Brunton, and J.~N. Kutz.
\newblock On dynamic mode decomposition: Theory and applications.
\newblock {\em Journal of Computational Dynamics}, 1(2):391--421, 2014.

\bibitem{Viberg19951835}
M.~Viberg.
\newblock Subspace-based methods for the identification of linear
  time-invariant systems.
\newblock {\em Automatica}, 31(12):1835 -- 1851, 1995.
\newblock Trends in System Identification.

\bibitem{wainwright_2019}
M.~J. Wainwright.
\newblock {\em High-Dimensional Statistics: A Non-Asymptotic Viewpoint}.
\newblock Cambridge Series in Statistical and Probabilistic Mathematics.
  Cambridge University Press, 2019.

\end{thebibliography}
\bibliographystyle{abbrv}

\end{document}